\newtheorem*{Definition}{Definition}
\newtheorem*{Remark}{Remark}
\newtheorem*{Notation}{Notation}
\newtheorem*{Theorem}{Theorem}
\numberwithin{equation}{section}
\newtheorem{thm}{Theorem}[section]
\newtheorem{lemma}[thm]{Lemma}
\theoremstyle{definition}
\newtheorem{defn}[thm]{Definition}
\newtheorem{ntn}[thm]{Notation}
\theoremstyle{remark}
\newtheorem{eg}[thm]{Example}
\title[Primitive Ideal Space of $C^*(R_+)\rtimes R^\times$]{Primitive Ideal Space of $C^*(R_+)\rtimes R^\times$}
\date{19 August 2024}
\author[X. Chen]{Xiaohui Chen}
\address{Xiaohui Chen,
Department of Mathematics and Physics, North China Electric Power University, Beijing 102206, China}
\email{xiaohui20720@126.com}
\author[H. Li]{Hui Li}
\address{Hui Li,
Department of Mathematics and Physics, North China Electric Power University, Beijing 102206, China}
\email{lihui8605@hotmail.com}
\email{50902471@ncepu.edu.cn}
\subjclass[2010]{46L05}
\keywords{ring, semigroup crossed product, crossed product, primitive ideal}
\thanks{The second author was supported by Fundamental Research Funds for the Central Universities (Grant No.~2023MS076).}
\thanks{The second author is the corresponding author.}
\begin{document}

\begin{abstract}
For an integral domain $R$ satisfying certain condition, we characterize the primitive ideal space and its Jacobson topology of the semigroup crossed product $C^*(R_+) \rtimes R^\times$. The main example is when $R=\mathbb{Z}[\sqrt{-3}]$.
\end{abstract}

\maketitle

\section{Introduction}

Motivated by the pioneering paper of Bost and Connes (\cite{BC95}), Cuntz in \cite{Cun08} constructed the first ring C*-algebra. Later Cuntz and Li in \cite{CL10} generalize the work of \cite{Cun08} to integral domain with finite quotients. Eventually Li in \cite{Li10} generalize the work of \cite{Cun08} to arbitrary rings. There are more than one way of studying C*-algebra associated to rings. Independent work due to Hirshberg \cite{Hir02}; Larsen-Li \cite{LL12}; Kaliszewski-Omland-Quigg \cite{KOQ14} investigates C*-algebras from $p$-adic rings. Li in \cite{Li12} defined the notion of semigroup C*-algebras and proved that the $ax+b$-semigroup C*-algebra of a ring is an extension of the ring C*-algebra. When the ring is the ring of integers of a field, Li in \cite{Li12} proved the $ax+b$-semigroup C*-algebra is isomorphic to another construction due to Cuntz, Deninger, and Laca in \cite{CDL13}. Very recent work due to Bruce-Li in \cite{BL23, BL24} and Bruce-Kubota-Takeishi in \cite{BKT24} on the algebraic dynamical systems and their associated C*-algebras solves quite a few open problems.

For an integral domain $R$, denote by $R_+$ the additive group $(R,+)$ and denote by $R^\times$ the multiplicative semigroup $(R\setminus \{0\},\cdot)$. Then there is a natural unital and injective action of $R^\times$ on $C^*(R_+)$ by multiplication, thus we obtain a semigroup crossed product $C^*(R_+) \rtimes R^\times$. In this paper we characterize the primitive ideal space and its Jacobson topology of the semigroup crossed product $C^*(R_+) \rtimes R^\times$ under certain condition. Our main example is $R=\mathbb{Z}[\sqrt{-3}]$. The semigroup crossed product $C^*(R_+) \rtimes R^\times$ is very related to the known construction. In the appendix we show that $C^*(R_+) \rtimes R^\times$ is an extension of the boundary quotient of the opposite semigroup of the $ax+b$-semigroup of the ring and that when the ring is a GCD domain, $C^*(R_+) \rtimes R^\times$ is isomorphic to the boundary quotient of the opposite semigroup of the $ax+b$-semigroup of the ring. We point out that there is only a few literature about the opposite semigroup C*-algebra of the $ax+b$-semigroup of a ring, see for example \cite{CEL13, Li16, LN16}.

\subsection*{Standing Assumptions:} \textsf{Throughout this paper, any semigroup is assumed to be discrete, countable, unital, and left cancellative; any group is assumed to be discrete and countable; any subsemigroup of a semigroup is assumed to inherit the unit of the semigroup; any ring is assumed to be countable, unital, $0 \neq 1$; any topological space is assumed to be second countable.}

\section{Laca's Dilation Theorem Revisited}

Laca in \cite{La00} proves a very important theorem which dilates a semigroup dynamical system $(A,P,\alpha)$ to a C*-dynamical system $(B,G,\beta)$ so that the semigroup crossed product $A \rtimes_{\alpha}^{e} P$ is Morita equivalent to the crossed product $B \rtimes_{\beta} G$. In this section, we revisit Laca's theorem when $A$ is a unital commutative C*-algebra.

\begin{ntn}
Let $P$ be a subsemigroup of a group $G$ satisfying $G=P^{-1}P$. For $p,q \in P$, define $p \leq q$ if $qp^{-1} \in P$. Then $\leq$ is a reflexive, transitive, and directed relation on $P$.
\end{ntn}

\begin{thm}[{cf. \cite[Theorem~2.1]{La00}}]\label{Laca dilation thm}
Let $P$ be a subsemigroup of a group $G$ satisfying $G=P^{-1}P$, let $A=C(X)$ where $X$ is a compact Hausdorff space, and let $\alpha:P \to \mathrm{End}(A)$ be a semigroup homomorphism such that $\alpha_p$ is unital and injective for all $p \in P$. Then there exists a dynamical system $(X_\infty,G,\gamma)$ ($X_\infty$ is compact Hausdorff), such that $A \rtimes_{\alpha}^{e} P$ is Morita equivalent to $C(X_\infty) \rtimes_\gamma G$.
\end{thm}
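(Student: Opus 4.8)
The plan is to construct the dilation $X_\infty$ explicitly as an inverse limit and then invoke (a commutative, concrete form of) Laca's abstract dilation machinery. First I would record the order-theoretic setup: since $G = P^{-1}P$ and $\leq$ is directed, for $p \leq q$ the endomorphism $\alpha_q$ factors through $\alpha_p$ in the sense that there is a unique unital homomorphism $\alpha_{q,p} \colon A \to A$ with $\alpha_q = \alpha_{q,p} \circ \alpha_p$ — this uses injectivity of $\alpha_p$ crucially, so $\alpha_{q,p}$ is itself injective. Dually, writing $A = C(X)$ and letting $f_p \colon X \to X$ be the continuous surjection with $\alpha_p = f_p^*$, we get continuous surjections $f_{q,p} \colon X \to X$ with $f_p = f_{q,p} \circ f_q$ whenever $p \leq q$. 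I would then set $X_\infty := \varprojlim (X, f_{q,p})$ over the directed set $(P,\leq)$, a compact Hausdorff (indeed, nonempty) space, with canonical projections $\pi_p \colon X_\infty \to X$.

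Next I would build the $G$-action $\gamma$ on $X_\infty$. The idea is that each $p \in P$ should act on $X_\infty$ as an automorphism, in a way that covers $f_p$ on the base at the appropriate level; concretely, I would define, for $p \in P$, a shift-type map $\gamma_p \colon X_\infty \to X_\infty$ by reindexing: using directedness of $\leq$, for each $q$ pick $r$ with $q \leq r$ and $pq \leq r$ — hmm, more cleanly, one defines $\gamma_p$ on the inverse system by $(\gamma_p x)_q = x_{?}$ via the fact that multiplication by $p$ is an order-isomorphism of $P$ onto the ideal $pP$, combined with the bonding maps to descend from $pP$ back to all of $P$. One checks $\gamma_p \gamma_q = \gamma_{pq}$ and that each $\gamma_p$ is a homeomorphism, so by the universal property $G = P^{-1}P$ the assignment $p \mapsto \gamma_p$ extends to a genuine action $\gamma \colon G \to \mathrm{Homeo}(X_\infty)$. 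I would also verify the compatibility $\pi_p \circ \gamma_p = f_p \circ \pi_e$ (or the analogous identity), which is what pins down that $\gamma$ dilates $\alpha$.

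With $(X_\infty, G, \gamma)$ in hand, the final step is to identify $A \rtimes_\alpha^e P$ with a full corner of $C(X_\infty) \rtimes_\gamma G$. Here I would appeal directly to Laca's Theorem~2.1 from \cite{La00}: the inverse limit $C(X_\infty) = \varinjlim (A, \alpha_{q,p})$ together with the extended action is precisely the minimal automorphic dilation $(B, G, \beta)$ produced by that theorem (minimality and the direct-limit description are exactly where $\alpha_p$ unital and injective enter), and Laca's theorem then gives the Morita equivalence $A \rtimes_\alpha^e P \sim_{\mathrm{Morita}} B \rtimes_\beta G = C(X_\infty) \rtimes_\gamma G$. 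So the role of this section is really to make Laca's abstract $B$ concrete as $C$ of an inverse limit space, which is what the later sections will need.

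The main obstacle I anticipate is bookkeeping in the construction of $\gamma_p$ and the verification that it is well defined and homeomorphic: the reindexing "multiply the index set by $p$" only makes sense after composing with bonding maps to get back to a cofinal set of indices, and checking that different choices of the auxiliary index $r$ (forced by directedness) give the same map requires the cocycle-type identities $f_{r,s} \circ f_{s,p} = f_{r,p}$ among the bonding maps. Once those commute-on-the-nose identities are nailed down, extending from the semigroup $P$ to the group $G$ and confirming the dilation compatibility with $\alpha$ is formal, and the Morita equivalence is then immediate from \cite{La00}.
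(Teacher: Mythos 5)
Your proposal follows essentially the same route as the paper: realize Laca's minimal automorphic dilation $A_\infty=\varinjlim\,(A,\alpha_{qp^{-1}})$ concretely as $C(X_\infty)$ for the inverse limit $X_\infty=\varprojlim\,(X,f_{qp^{-1}})$ over the directed set $(P,\leq)$, and then quote \cite[Theorem~2.1]{La00} for the Morita equivalence. Two small repairs: the dual of the factorization $\alpha_q=\alpha_{qp^{-1}}\circ\alpha_p$ is $f_q=f_p\circ f_{q,p}$ with $f_{q,p}=f_{qp^{-1}}$, not $f_p=f_{q,p}\circ f_q$ as you wrote (the identity actually needed for the inverse system is only the cocycle relation $f_{q,p}\circ f_{r,q}=f_{r,p}$, which follows from $f_a\circ f_b=f_{ba}$); also note that the existence of the connecting maps does not use injectivity of $\alpha_p$ --- injectivity is what makes the limit maps $A\to A_\infty$ embeddings. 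Finally, you need not hand-build the reindexing action $\gamma_p$ and worry about the choice of auxiliary indices: Laca's theorem already provides the automorphic action $\beta$ of $G$ on $A_\infty$ satisfying $\beta_{p_0}\circ\alpha^{pp_0}=\alpha^{p}$, and $\gamma$ is simply its Gelfand dual, which on points is $(p_0\cdot x)_p=x_{pp_0}$; this sidesteps the bookkeeping you flag as the main obstacle.
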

\begin{proof}
By \cite[Theorem~2.1]{La00}, there exist a C*-dynamical system $(A_\infty,G,\beta)$ such that $A \rtimes_{\alpha}^{e} P$ is Morita equivalent to $A_\infty \rtimes_\beta G$. We cite the proof of \cite[Theorem~2.1]{La00} to sketch the construction of $A_\infty$ and the definition of $\beta$: For $p \in P$, define $A_p:=A$. For $p,q \in P$ with $p \leq q$, define $\alpha_{p,q}:A_p \to A_q$ to be $\alpha_{qp^{-1}}$. Then $\{(A_p,\alpha_{p,q}):p,q \in P,p \leq q\}$ is an inductive system. Denote by $A_\infty:=\lim_{p}(A_p,\alpha_{p,q})$, denote by $\alpha^{p}:A_p \to A_\infty$ the natural unital embedding for all $p \in P$, and denote by $\beta:G \to \mathrm{Aut}(A_\infty)$ the homomorphism satisfying $\beta_{p_0}\circ \alpha^{pp_0}=\alpha^p$ for all $p_0,p\in P$.

For $p \in P$, denote by $f_p:X \to X$ the unique surjective continuous map induced from $\alpha_p$. For $p\in P$, denote by $X_p:=X$. For $p,q \in P$ with $p \leq q$, denote by $f_{q,p}:X_q \to X_p$ the unique surjective continuous map induced from $\alpha_{p,q}$. Since $\alpha_{p,q}=\alpha_{qp^{-1}},f_{q,p}=f_{qp^{-1}}$. Then $\{(X_p,f_{q,p}):p,q \in P,p \leq q\}$ is an inverse system. Denote by $X_\infty:=\{(x_p)_{p \in P} \in \prod_{p \in P}X_p:f_{q,p}(x_q)=x_p\forall p \leq q\}$, which is the inverse limit of the inverse system. \cite[Example~II.8.2.2 (i)]{BL06}, $A_\infty \cong C(X_\infty)$. For $p \in P$, denote by $f^p:X_\infty \to X_p$ the unique projection induced from $\alpha^p$. Then $f_{q,p}\circ f^q=f^p$ for all $p,q \in P,p \leq q$. For $p,p_0 \in P,f \in C(X_\infty)$, denote by $\gamma_{p_0}:X_\infty \to X_\infty$ the unique homeomorphism such that $\beta_{p_0}(f)=f\circ \gamma_{p_0}^{-1}$.

So $(X_\infty,G,\gamma)$ is a dynamical system such that $C(X_\infty) \rtimes_\gamma G \cong A_\infty \rtimes_\beta G$. Hence $A \rtimes_{\alpha}^{e} P$ is Morita equivalent to $C(X_\infty) \rtimes_\gamma G$.
\end{proof}

\begin{ntn}
We explicitly describe $X_\infty$ and the action of $G$ on $X_\infty$ given in the above theorem.

\[
X_\infty=\{(x_p)_{p \in P} \in \prod_{p \in P}X_p:f_{q,p}(x_q)=x_p,\forall p \leq q\}.
\]

For $p_0,p,q \in P$ with $q \geq p_0,p$, for $(x_p)_{p \in P} \in X_\infty$, we have

\[(p_0\cdot (x_p))(p)=x_{pp_0}, (p_0^{-1}\cdot (x_p))(p)=f_{q,p}(x_{qp_0^{-1}}).\]

Specifically, when $G$ is abelian, we have a simpler form of the group action

 \[\frac{p_0}{q_0} \cdot (x_p)=(f_{q_0}(x_{pp_0})).\]
\end{ntn}

Our goal is to apply Theorem~\ref{Laca dilation thm} to characterize the primitive ideal space of the semigroup crossed product $C^{*}(R_+)\rtimes R^{\times}$ of an integral domain. Since $R^{\times}$ is abelian, we will need the following version of Williams' theorem.

\begin{defn}
 Let $G$ be an abelian group, let $X$ be a locally compact Hausdorff space,
 and let $\alpha:G \to \mathrm{Homeo}(X)$ be a homomorphism. For $x,y\in X$, define $x \sim y$ if $\overline{G\cdot x} =\overline{G \cdot y}$. Then $\sim $ is an equivalent relation on $X$. For $x \in X$, define $[x] :=
 \overline{G\cdot x}$, which is called the \textit {quasi-orbit} of $x$. The quotient space $Q(X /G)$ by the relation $\sim$ is called the
 \textit{quasi-orbit space}. For $x \in X$, define $G_{x}:=\{ g \in G : g \cdot x = x\}$ , which is called the \textit{isotropy
 group} (or \textit{stable group}) at $x$. For $([x],\phi ), ([y],\psi ) \in Q(X/ G)\times \widehat{G}$, define $([x],\phi )\approx ([y],\psi )$
 if $[x] = [y]$ and $\phi\vert_{G_{x}}= \psi\vert_{G_{x}}$.Then $\approx$ is an equivalent relation on $Q(X /G) \times \widehat{G}$.
\end{defn}

\begin{thm}[{\cite[Theorem~1.1]{LR00}}]\label{Williams thm}
Let $G$ be an abelian group, let $X$ be a locally compact Hausdorff space, and let $\alpha : G \to \mathrm{Homeo}(X)$ be a homomorphism. Then $\mathrm{Prim}(C_{0}(X)\rtimes_\alpha G)\cong(X\times \widehat{G})/\approx$.
\end{thm}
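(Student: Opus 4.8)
The plan is to deduce the homeomorphism by combining the solution of the generalised Effros--Hahn conjecture with Pontryagin duality for the (abelian) stability subgroups, so that $(X\times\widehat G)/\!\approx$ emerges as the natural index set for a family of induced representations that exhausts the primitive ideals. Write $\Bb:=C_0(X)\rtimes_\alpha G$. Since $G$ is abelian it is amenable, and by the standing assumptions $G$ is discrete and countable and $X$ is second countable, so the Gootman--Rosenberg theorem applies: every primitive ideal of $\Bb$ is the kernel of a representation induced from a stability subgroup. Concretely, from a point $x\in X$ and the canonical unitaries indexed by $G_x$ one obtains an induction procedure $\Ind_x$ sending representations of $C^*(G_x)$ to representations of $\Bb$. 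Because $G_x$ is abelian, $C^*(G_x)\cong C(\widehat{G_x})$ and $\widehat{G_x}\cong\widehat G/G_x^\perp$ with $G_x^\perp=\{\phi\in\widehat G:\phi|_{G_x}=1\}$, so every character of $G_x$ is a restriction $\phi|_{G_x}$ with $\phi\in\widehat G$. Thus each pair $(x,\phi)\in X\times\widehat G$ determines a primitive ideal $\mathcal I(x,\phi):=\ker\Ind_x(\phi|_{G_x})$, and Gootman--Rosenberg says the assignment $(x,\phi)\mapsto\mathcal I(x,\phi)$ is surjective onto $\mathrm{Prim}(\Bb)$.

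Next I would show this assignment factors through $\approx$ and descends to a bijection $(X\times\widehat G)/\!\approx\;\longrightarrow\;\mathrm{Prim}(\Bb)$, i.e.\ that $\mathcal I(x,\phi)=\mathcal I(y,\psi)$ exactly when $\overline{G\cdot x}=\overline{G\cdot y}$ and $\phi|_{G_x}=\psi|_{G_x}$. A preliminary observation, which also explains why $\approx$ is well defined on $Q(X/G)\times\widehat G$, is that for abelian $G$ the stabiliser $G_x$ depends only on the quasi-orbit: if $y\in\overline{G\cdot x}$, say $y=\lim g_n x$, and $g\in G_x$, then $gy=\lim g_ng x=\lim g_n x=y$, so $G_x\subseteq G_y$, and symmetry gives equality whenever $[x]=[y]$. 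Granting this, the orbit closure $\overline{G\cdot x}$ is an invariant of $\mathcal I(x,\phi)$: the induced representation is supported on $\overline{G\cdot x}$, so $\overline{G\cdot x}$ is the complement of the largest open invariant $U$ with $C_0(U)\rtimes G\subseteq\mathcal I(x,\phi)$, and equal ideals force $[x]=[y]$. With the quasi-orbit $Y:=\overline{G\cdot x}=\overline{G\cdot y}$ fixed, both induced representations factor through the quotient $C_0(Y)\rtimes G$, which has $G\cdot x$ as a dense orbit; the standard analysis of representations of a crossed product with a dense orbit identifies the kernel there with an ideal depending only on the character of the common stabiliser $G_x$, so $\mathcal I(x,\phi)=\mathcal I(y,\psi)$ becomes equivalent to $\phi|_{G_x}=\psi|_{G_x}$. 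One can alternatively extract the character coordinate using the Takai dual action of $\widehat G$ on $\Bb$, under which $\chi\cdot\mathcal I(x,\phi)=\mathcal I(x,\chi\phi)$, so that $\{\psi:\mathcal I(x,\psi)=\mathcal I(x,\phi)\}$ is a coset of the stabiliser of $\mathcal I(x,\phi)$ in $\widehat G$, which one identifies with $G_x^\perp$.

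The remaining step is to promote the bijection to a homeomorphism, putting the quotient topology on $(X\times\widehat G)/\!\approx$ and the Jacobson topology on $\mathrm{Prim}(\Bb)$. Continuity of $(x,\phi)\mapsto\mathcal I(x,\phi)$ follows from continuity of the induction process together with continuity of the structural maps $x\mapsto[x]$ and $\phi\mapsto\phi|_{G_x}$ where relevant. For the converse I would argue directly with the hull--kernel topology: $\mathrm{Prim}(\Bb)$ is second countable, so it suffices to test sequences, and $\mathcal I(x_n,\phi_n)\to\mathcal I(x,\phi)$ forces $\mathcal I(x,\phi)\supseteq\bigcap_n\mathcal I(x_n,\phi_n)$; reading off the $C_0(X)$-part translates this into the statement that $[x]$ lies in the closure of $\{[x_n]\}$ in $Q(X/G)$, and reading off the $\widehat G$-part (after passing to a subsequence) into convergence of the $\phi_n|_{G_{x_n}}$ to a character restricting to $\phi|_{G_x}$; together these say precisely that $[(x_n,\phi_n)]\to[(x,\phi)]$ in the quotient. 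These two ``readings'' are where the analysis of limits of induced ideals due to Williams and to Gootman--Rosenberg enters, and in the cited reference the statement is packaged so that the translation is available directly.

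I expect the genuine obstacle to be this last, topological, half, and within it the fact that $x\mapsto G_x$ is not continuous: stabilisers can only jump upward at limit points, so the ``restricted character'' datum $\phi|_{G_x}$ is not continuous in any naive sense and the two topologies cannot be compared coordinate by coordinate. The resolution is not to try to make the parametrising space $X\times\widehat G$ well behaved, but to reason inside $\mathrm{Prim}(\Bb)$ with the hull--kernel topology, invoking the known semicontinuity of kernels of induced representations as the inducing data vary, which is the technical core of the Effros--Hahn/Gootman--Rosenberg circle of ideas.
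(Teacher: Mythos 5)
The paper states this result as a direct citation of \cite[Theorem~1.1]{LR00} and offers no proof of its own; your sketch follows the standard Effros--Hahn/Gootman--Rosenberg route (induction from isotropy subgroups, invariance of the quasi-orbit and of $G_x$ along orbit closures for abelian $G$, the dual action of $\widehat{G}$ to pin down the character coordinate modulo $G_x^\perp$), which is precisely the argument underlying the cited reference. Your outline is correct, and you rightly identify the topological half---where $x\mapsto G_x$ is only semicontinuous and the comparison must be carried out inside $\mathrm{Prim}(C_0(X)\rtimes_\alpha G)$ with the hull--kernel topology---as the genuine technical core.
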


\section{Primitive Ideal Structure of $C^*(R_+)\rtimes R^\times$}

In this section we characterize the primitive ideal space and its Jacobson topology of the semigroup crossed product $C^*(R_+) \rtimes R^\times$ under certain condition.

\begin{ntn}
Let $R$ be an integral domain. Denote by $Q$ the field of fractions of $R$, denote by $R_+$ the additive group $(R,+)$, denote by $\widehat{R_+}$ the dual group of $R_+$, denote by $R^\times$ the multiplicative semigroup $(R\setminus \{0\},\cdot)$, denote by $Q^\times$ the enveloping group $(Q\setminus \{0\},\cdot)$ of $R^\times$, denote by $\{u_r\}_{r \in R_+}$ the family of unitaries generating $C^*(R_+)$, denote by $\alpha:R^\times \to \mathrm{End}(C^*(R_+))$ the homomorphism such that $\alpha_{p}(u_r)=u_{pr}$ for all $p \in R^\times,r \in R_+$. Observe that for any $p \in R^\times, \alpha_p$ is unital and injective, and the map $f_p:\widehat{R_+} \to \widehat{R_+},\phi \mapsto \phi(p \cdot)$ is the unique surjective continuous map induced from $\alpha_p$. Denote by $X_\infty(R):=\{\phi=(\phi_p)_{p\in R^\times}\in\prod_{p\in R^\times}\widehat{R_+}:\phi_q(\frac{q}{p}\cdot )=\phi_p,\text{ whenever } p \vert q\}$. Then $\frac{p_0}{q_0}\cdot(\phi_p)=(\phi_{pp_0}(q_0\cdot))$.
\end{ntn}

\begin{lemma}\label{calculate fixed pt}
Let $R$ be an integral domain. Fix $(\phi_p)_{p\in R^\times} \in X_\infty(R)$. If $(\phi_p)_{p\in R^\times}\neq(1)_{p \in R^\times}$, then $Q^{\times}_{\phi}=\{1_R\}$. If $(\phi_p)_{p\in R^\times}= (1)_{p\in R^\times}$, then $Q^{\times}_{\phi}=Q^{\times}$.
\end{lemma}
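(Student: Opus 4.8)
The plan is to read both statements directly off the explicit formula for the $Q^\times$-action on $X_\infty(R)$ together with the compatibility relations defining $X_\infty(R)$; no topology is needed, only that $R$ is an integral domain, so that a nonzero element of $R$ lies in $R^\times$. The case $\phi=(1)_{p\in R^\times}$ is immediate: for every $\frac{p_0}{q_0}\in Q^\times$ one has $\frac{p_0}{q_0}\cdot(1)_{p\in R^\times}=(1_{pp_0}(q_0\cdot))_{p\in R^\times}=(1)_{p\in R^\times}$, since the trivial character precomposed with multiplication by $q_0$ is trivial; hence $Q^\times_\phi=Q^\times$.

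For $\phi\neq(1)_{p\in R^\times}$ I would argue by contradiction. Suppose $\frac{p_0}{q_0}\in Q^\times_\phi$ with $\frac{p_0}{q_0}\neq 1_R$, i.e.\ $p_0\neq q_0$, and put $t:=q_0-p_0$, a nonzero element of $R$, so $t\in R^\times$. Writing out $\frac{p_0}{q_0}\cdot(\phi_p)=(\phi_p)$ gives $\phi_{sp_0}(q_0 r)=\phi_s(r)$ for all $s\in R^\times$ and $r\in R$, while the defining relation of $X_\infty(R)$ applied to $s\mid sp_0$ (quotient $p_0$) gives $\phi_{sp_0}(p_0 r)=\phi_s(r)$. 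Since $\phi_{sp_0}$ is a homomorphism, the two combine to $\phi_{sp_0}(tr)=1$ for all $s\in R^\times$ and $r\in R$. Now fix $p\in R^\times$ and $r_0\in R$; applying this identity with $s:=pt$ and $r:=p_0 r_0$ yields $\phi_{ptp_0}(tp_0 r_0)=1$, whereas the defining relation of $X_\infty(R)$ applied to $p\mid ptp_0$ (quotient $tp_0$) gives $\phi_{ptp_0}(tp_0 r_0)=\phi_p(r_0)$; hence $\phi_p(r_0)=1$. As $p$ and $r_0$ were arbitrary this forces $\phi=(1)_{p\in R^\times}$, a contradiction. So $Q^\times_\phi$ contains no element other than $1_R$, and since it clearly contains $1_R$ we conclude $Q^\times_\phi=\{1_R\}$.

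The one step carrying genuine content is the substitution $s=pt$, $r=p_0 r_0$: conceptually it expresses that $t=q_0-p_0$ is invertible in the field of fractions $Q$, and passing from the index $p$ to the more divisible index $ptp_0$ is exactly what lets one exploit that invertibility while staying inside $R$. (Equivalently, one could identify $X_\infty(R)$ with the Pontryagin dual of $(Q,+)$, under which the stabilizer condition reads ``a character of $(Q,+)$ is trivial on $\frac{t}{p_0}Q=Q$''; the direct computation above avoids setting up that identification.)
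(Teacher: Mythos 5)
Your proof is correct and follows essentially the same route as the paper: both derive $\phi_{sp_0}((q_0-p_0)\,\cdot)=1$ by comparing the fixed-point identity with the inverse-system relation, and then substitute a suitably divisible index ($s=pt$ in your notation, $p=p_1(p_0-q_0)$ in the paper's) to pull the triviality back to an arbitrary coordinate via the relation defining $X_\infty(R)$. The only cosmetic difference is that you conclude $\phi_p=1$ for every $p$, while the paper contradicts the nontriviality at a single chosen $p_1$.
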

\begin{proof}
We prove the first statement. Suppose that there exists $1 \neq \frac{p_0}{q_0} \in Q^{\times}$ such that $\frac{p_0}{q_0} \cdot \phi=\phi$, for a contradiction. Since $(\phi_p)_{p\in R^\times}\neq(1)_{p \in R^\times}$, there exists $p_1 \in R^\times$ such that $\phi_{p_1} \neq 1$. Then for any $p \in R^{\times}, \phi_p=\phi_{pp_0}(q_0 \cdot)$. Since for any $p \in R^\times$, $\phi_{pp_0}(p_0\cdot )=\phi_p$, we deduce that $\phi_{pp_0}(p_0\cdot)=\phi_{pp_0}(q_0 \cdot)$ for all $p \in R^\times$. So for any $p \in R^\times$, we have $\phi_{pp_0}((p_0-q_0)\cdot)= 1$. Hence for any $p \in R^\times, \phi_{pp_0}((p_0-q_0)p_0\cdot)= 1$. When $p=p_1(p_0-q_0)$, we get $\phi_{p_1}=\phi_{p_1(p_0-q_0)p_0}(((p_0-q_0)p_0\cdot)= 1$, which is a contradiction. Therefore $Q^{\times}_{\phi}=\{1_R\}$.

The second statement is straightforward to prove.
\end{proof}

\begin{lemma}\label{Q(X_infty(R)/Qtimes)}
Let $R$ be an integral domain. Suppose that for any $\epsilon>0$, any $(1)_{p \in R^\times}\neq (\phi_p)_{p\in R^\times} \in X_\infty(R)$, any $\pi \in \widehat{R_+}$, any $P \in R^\times$, any $r_1,r_2,\dots, r_n \in R_+$, there exist $p,q \in R^\times$ with $P \vert p$, such that $\vert\phi_{p}(qr_i)-\pi(r_i)\vert<\epsilon,i=1,2,\dots,n$. Then $Q(X_\infty(R)/Q^\times)$ consists of only two points with the only nontrivial closed subset $\{[(1)_{p \in R^\times}]\}$.
\end{lemma}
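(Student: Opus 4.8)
The plan is to reduce the statement to a single density assertion: \emph{for every $\phi=(\phi_p)_{p\in R^\times}\in X_\infty(R)$ with $\phi\neq(1)_{p\in R^\times}$, the orbit $Q^\times\cdot\phi$ is dense in $X_\infty(R)$.} Once this is available, everything else is formal. First, $(1)_{p\in R^\times}$ is a $Q^\times$-fixed point, directly from $\frac{p_0}{q_0}\cdot(\phi_p)=(\phi_{pp_0}(q_0\cdot))$ and the fact that the trivial character sends $q_0r$ to $1$; hence its quasi-orbit is the singleton $\{(1)_{p\in R^\times}\}$, which is closed since $X_\infty(R)$ is compact Hausdorff. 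On the other hand, the density assertion gives $\overline{Q^\times\cdot\phi}=X_\infty(R)$ for every $\phi\neq(1)_{p\in R^\times}$, so all such $\phi$ lie in one quasi-orbit, namely $X_\infty(R)$ itself, and this quasi-orbit is distinct from $[(1)_{p\in R^\times}]$ because $X_\infty(R)$ has more than one point. (Here I would invoke $\widehat{R_+}\neq\{1\}$, which holds as $R_+\neq0$, together with the standard fact that the coordinate projections of an inverse limit of surjections of nonempty compact Hausdorff spaces are surjective; this also shows the hypothesis is not vacuous.) Thus $Q(X_\infty(R)/Q^\times)=\{[(1)_{p\in R^\times}],[\phi]\}$ has exactly two points. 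For the topology, a subset of $Q(X_\infty(R)/Q^\times)$ is closed exactly when its preimage under the quotient map is closed in $X_\infty(R)$: the preimage of $\{[(1)_{p\in R^\times}]\}$ is $\{(1)_{p\in R^\times}\}$, which is closed; while the preimage of $\{[\phi]\}$ is $X_\infty(R)\setminus\{(1)_{p\in R^\times}\}$, which is not closed because $Q^\times\cdot\phi\subseteq X_\infty(R)\setminus\{(1)_{p\in R^\times}\}$ (as $(1)_{p\in R^\times}$ is fixed and $\phi\neq(1)_{p\in R^\times}$) and $(1)_{p\in R^\times}\in\overline{Q^\times\cdot\phi}$ by density. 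Hence the closed subsets of the two-point space are exactly $\emptyset$, $\{[(1)_{p\in R^\times}]\}$, and the whole space.

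To prove the density assertion I would first isolate a convenient basis for the topology of $X_\infty(R)$. Viewing $X_\infty(R)$ as a closed subspace of $\prod_{p\in R^\times}\widehat{R_+}$ and using that $(R^\times,\mid)$ is directed (any $p_1,\dots,p_k\in R^\times$ divide $P:=p_1\cdots p_k$), one checks that the sets $U(P,V):=\{\psi\in X_\infty(R):\psi_P\in V\}$, for $P\in R^\times$ and $V\subseteq\widehat{R_+}$ open, form a basis: given a point $\psi$ in a basic product-open subset $\bigcap_{j=1}^{k}\mathrm{pr}_{p_j}^{-1}(V_j)\cap X_\infty(R)$, set $P:=p_1\cdots p_k$ and let $V$ be the open set of $\chi\in\widehat{R_+}$ with $\chi(\frac{P}{p_j}\cdot)\in V_j$ for all $j$; since $\psi_{p_j}=\psi_P(\frac{P}{p_j}\cdot)$ one gets $\psi\in U(P,V)\subseteq\bigcap_{j=1}^{k}\mathrm{pr}_{p_j}^{-1}(V_j)$. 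Moreover, since $\widehat{R_+}$ carries the topology of pointwise convergence on $R_+$, any open set containing a character $\pi$ contains one of the form $\{\chi\in\widehat{R_+}:|\chi(r_i)-\pi(r_i)|<\epsilon,\ i=1,\dots,n\}$.

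With this basis at hand, density follows by a direct translation of the hypothesis. Fix $\phi\neq(1)_{p\in R^\times}$ and a nonempty open $W\subseteq X_\infty(R)$; choose $\psi\in W$, then $P\in R^\times$ and open $V$ with $\psi\in U(P,V)\subseteq W$, then $\epsilon>0$ and $r_1,\dots,r_n\in R_+$ with $\{\chi:|\chi(r_i)-\psi_P(r_i)|<\epsilon\}\subseteq V$. Applying the hypothesis to $\epsilon$, this $\phi$, $\pi:=\psi_P$, this $P$, and $r_1,\dots,r_n$ produces $p,q\in R^\times$ with $P\mid p$ and $|\phi_p(qr_i)-\psi_P(r_i)|<\epsilon$ for all $i$. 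Setting $p_0:=p/P\in R^\times$ and $q_0:=q$, the action formula gives $\left(\frac{p_0}{q_0}\cdot\phi\right)_P=\phi_{Pp_0}(q_0\cdot)=\phi_p(q\cdot)$, whence $\left(\frac{p_0}{q_0}\cdot\phi\right)_P\in V$ and so $\frac{p_0}{q_0}\cdot\phi\in U(P,V)\subseteq W$; thus $Q^\times\cdot\phi$ meets $W$. I expect the only real work here to be bookkeeping — matching the indexing ``$P\mid p$'' in the hypothesis with the $P$-th coordinate of $\frac{p_0}{q_0}\cdot\phi$, and verifying that $\{U(P,V)\}$ is a basis of the inverse limit — since the genuine approximation content has been placed in the hypothesis. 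The subtlest point is really just confirming that $X_\infty(R)$ is strictly larger than $\{(1)_{p\in R^\times}\}$, which is where surjectivity of the coordinate projections enters.
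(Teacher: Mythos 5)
Your proof is correct and follows essentially the same route as the paper: the hypothesis is translated into density of $Q^\times$-orbits of nontrivial points via the approximation at a single cofinal coordinate of the inverse limit, and the trivial character is handled as a fixed point whose class is the unique closed singleton. The only place you go slightly further is in proving full density of the orbit (including at $(1)_{p\in R^\times}$), which cleanly justifies that $X_\infty(R)\setminus\{(1)_{p\in R^\times}\}$ is not closed --- a point the paper states somewhat loosely (it writes $\overline{Q^{\times}\cdot\phi}=X_\infty(R)\setminus\{(1)_{p\in R^\times}\}$, which cannot literally hold for a closure).
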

\begin{proof}
Since $\overline{Q^{\times} \cdot (1)_{p \in R^\times}}=\overline{(1)_{p \in R^\times}}=(1)_{p \in R^\times}$, we get that for any $(1)_{p \in R^\times}\neq (\phi_p)_{p\in R^\times} \in X_\infty(R), [(\phi_p)_{p\in R^\times}] \neq [(1)_{p \in R^\times}]$.

Fix $(\phi_p)_{p\in R^\times}, (\psi_p)_{p\in R^\times} \in X_\infty(R)$ such that $(\phi_p)_{p\in R^\times}, (\psi_p)_{p\in R^\times} \neq (1)_{p \in R^\times}$. We aim to show that $[(\phi_p)_{p\in R^\times}]=[(\psi_p)_{p\in R^\times}]$. It suffices to show that $(\psi_p)_{p\in R^\times} \in \overline{Q^{\times} \cdot (\phi_p)_{p\in R^\times}}$ since $(\phi_p)_{p\in R^\times} \in \overline{Q^{\times} \cdot (\psi_p)_{p\in R^\times}}$ follows the same argument. Fix $\epsilon>0$, fix $p_1,p_2,\dots,p_n \in R^\times$, and fix $r_1,r_2,\dots,r_n \in R$. By the condition posed in the lemma, there exist $p_0,q_0 \in R^{\times}$ such that $\vert \phi_{p_1p_2\cdots p_np_0}(q_0p_1\cdots p_{i-1}p_{i+1}\cdots p_nr_j)-\psi_{p_1p_2\cdots p_n}(p_1\cdots p_{i-1}p_{i+1}\cdots p_nr_j)\vert<\epsilon$, for all $1\leq i,j\leq n$. So $\vert \phi_{p_ip_0}(q_0r_j)-\psi_{p_i}(r_j)\vert<\epsilon$, for all $1\leq i,j\leq n$. Hence $(\psi_p)_{p\in R^\times} \in \overline{Q^{\times} \cdot (\phi_p)_{p\in R^\times}}$. Therefore $[(\phi_p)_{p\in R^\times}]=[(\psi_p)_{p\in R^\times}]$.

We conclude that $Q(X_\infty(R)/Q^\times)$ consists of only two points. For any $(1)_{p \in R^\times}\neq (\phi_p)_{p\in R^\times} \in X_\infty(R)$, $\overline{Q^{\times} \cdot (\phi_p)_{p\in R^\times}}=X_\infty(R) \setminus \{(1)_{p \in R^\times}\}$ is open but not closed. Finally we deduce that $\{[(1)_{p \in R^\times}]\}$ is the only nontrivial closed subset of $Q(X_\infty(R)/Q^\times)$.
\end{proof}

\begin{thm}\label{Prim(C*(R+)rtimes Rtimes)}
Let $R$ be an integral domain satisfying the condition of Lemma~\ref{Q(X_infty(R)/Qtimes)}. Take an arbitrary $(1)_{p \in R^\times} \neq (\phi_p)_{p\in R^\times} \in X_\infty(R)$. Then $\mathrm{Prim}(C^*(R_+) \rtimes R^\times) \cong  \{[(\phi_p)_{p\in R^\times}]\} \amalg \{[(1)_{p \in R^\times}]\} \times \widehat{Q^\times}$.
\end{thm}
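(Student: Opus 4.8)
The strategy is to chain together the three results already established: Laca's dilation theorem (Theorem~\ref{Laca dilation thm}), Williams' theorem (Theorem~\ref{Williams thm}), and the computation of the quasi-orbit space (Lemma~\ref{Q(X_infty(R)/Qtimes)}) together with the isotropy computation (Lemma~\ref{calculate fixed pt}). First I would apply Theorem~\ref{Laca dilation thm} with $P = R^\times$, $G = Q^\times$, $A = C^*(R_+) \cong C(\widehat{R_+})$, and $\alpha$ the dilation action; since $R^\times$ is abelian with $Q^\times = (R^\times)^{-1}R^\times$, this gives that $C^*(R_+) \rtimes R^\times$ is Morita equivalent to $C(X_\infty(R)) \rtimes_\gamma Q^\times$, where $X_\infty(R)$ and the $Q^\times$-action are exactly as described in the Notation following Theorem~\ref{Laca dilation thm}. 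Morita equivalent C*-algebras have homeomorphic primitive ideal spaces, so $\mathrm{Prim}(C^*(R_+) \rtimes R^\times) \cong \mathrm{Prim}(C(X_\infty(R)) \rtimes_\gamma Q^\times)$.

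Next I would apply Theorem~\ref{Williams thm} to the system $(X_\infty(R), Q^\times, \gamma)$ — noting $X_\infty(R)$ is compact Hausdorff hence locally compact Hausdorff, and $Q^\times$ is abelian — to obtain $\mathrm{Prim}(C(X_\infty(R)) \rtimes_\gamma Q^\times) \cong (X_\infty(R) \times \widehat{Q^\times})/\approx$. Then I would identify this quotient concretely. By Lemma~\ref{Q(X_infty(R)/Qtimes)}, the quasi-orbit space $Q(X_\infty(R)/Q^\times)$ has exactly two points: the class $[(1)_{p \in R^\times}]$ and the class $[(\phi_p)_{p \in R^\times}]$ of any fixed non-trivial point. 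Unwinding the definition of $\approx$: two pairs $([x],\varphi)$ and $([y],\psi)$ are identified iff $[x] = [y]$ and $\varphi|_{Q^\times_x} = \psi|_{Q^\times_y}$ (the restriction makes sense since on a quasi-orbit the isotropy groups of all points with the same closure agree — or one argues directly at the representative). For $x$ in the non-trivial quasi-orbit, Lemma~\ref{calculate fixed pt} gives $Q^\times_x = \{1_R\}$, so the condition $\varphi|_{\{1_R\}} = \psi|_{\{1_R\}}$ is vacuous; hence the entire slice $\{[(\phi_p)]\} \times \widehat{Q^\times}$ collapses to a single point. For $x = (1)_{p \in R^\times}$, Lemma~\ref{calculate fixed pt} gives $Q^\times_x = Q^\times$, so $\varphi|_{Q^\times} = \psi|_{Q^\times}$ means $\varphi = \psi$; hence the slice $\{[(1)_{p \in R^\times}]\} \times \widehat{Q^\times}$ maps bijectively onto $\{[(1)_{p \in R^\times}]\} \times \widehat{Q^\times}$. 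Putting these together yields the set-theoretic bijection $\mathrm{Prim}(C^*(R_+)\rtimes R^\times) \cong \{[(\phi_p)_{p\in R^\times}]\} \amalg \{[(1)_{p \in R^\times}]\} \times \widehat{Q^\times}$.

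Finally, since the claim is stated as a homeomorphism (or at least the notation $\cong$ should be upgraded to one, matching the abstract's mention of the Jacobson topology), I would verify that the bijection above is a homeomorphism for the quotient topology on $(X_\infty(R) \times \widehat{Q^\times})/\approx$. This is routine: the quotient map is open and continuous, and one checks that a basic open set in $X_\infty(R) \times \widehat{Q^\times}$ pushes forward to an open set in the two-piece model, using that $\{[(1)_{p \in R^\times}]\}$ is the only non-trivial closed subset of $Q(X_\infty(R)/Q^\times)$ (again Lemma~\ref{Q(X_infty(R)/Qtimes)}) so the point $[(\phi_p)]$ is generic (dense) while the $\widehat{Q^\times}$-factor over $[(1)_{p \in R^\times}]$ carries its own (Hausdorff) topology.

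**Main obstacle.** The genuinely delicate point is the transfer of Morita equivalence to a homeomorphism of primitive ideal spaces together with the bookkeeping of topologies through all three identifications — in particular making sure the Jacobson topology on $\mathrm{Prim}(C^*(R_+)\rtimes R^\times)$ matches the description where $[(\phi_p)_{p\in R^\times}]$ is a dense (non-closed) point and $\{[(1)_{p\in R^\times}]\}\times \widehat{Q^\times}$ is the closed complement with its natural topology. The algebraic bijection is essentially forced by the two lemmas; the care lies in the topology.
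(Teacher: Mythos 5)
Your proposal is correct and follows essentially the same route as the paper: Laca's dilation theorem gives the Morita equivalence, Williams' theorem reduces the computation to the quasi-orbit space and isotropy groups, and Lemmas~\ref{Q(X_infty(R)/Qtimes)} and \ref{calculate fixed pt} identify those; your extra attention to the Jacobson topology matches the paper's closing description of the open sets. No substantive differences.
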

\begin{proof}
By Theorem~\ref{Laca dilation thm}, $\mathrm{Prim}(C^*(R_+) \rtimes R^\times)$ is Morita equivalent to $C(X_\infty(R)) \rtimes Q^\times$. So $\mathrm{Prim}(C^*(R_+) \rtimes R^\times) \cong \mathrm{Prim}(C(X_\infty(R)) \rtimes Q^\times)$. By Theorem~\ref{Williams thm} and Lemma~\ref{Q(X_infty(R)/Qtimes)}, $\mathrm{Prim}(C(X_\infty(R)) \rtimes Q^\times) \cong \{[(\phi_p)_{p\in R^\times}],[(1)_{p \in R^\times}]\} \times \widehat{Q^\times} /\approx$. By Lemma~\ref{calculate fixed pt}, $Q^{\times}_{(\phi_p)_{p\in R^\times}}=\{1_R\}$ and $Q^{\times}_{(1)_{p \in R^\times}}=Q^{\times}$. So $\mathrm{Prim}(C(X_\infty(R)) \rtimes Q^\times) \cong \{[(\phi_p)_{p\in R^\times}]\} \amalg \{[(1)_{p \in R^\times}]\} \times \widehat{Q^\times}$. Hence $\mathrm{Prim}(C^*(R_+) \rtimes R^\times) \cong \{[(\phi_p)_{p\in R^\times}]\} \amalg \{[(1)_{p \in R^\times}]\} \times \widehat{Q^\times}$, and the open sets of $\mathrm{Prim}(C^*(R_+) \rtimes R^\times)$ consists of $\{[(\phi_p)_{p\in R^\times}]\} \amalg \{[(1)_{p \in R^\times}]\} \times N$, where $N$ is an open subset of $\widehat{Q^\times}$.
\end{proof}

\begin{eg}
Let $R=\mathbb{Z}$. Then $\widehat{R_+}=\mathbb{T}$. Fix $\epsilon>0$, fix $(1)_{p \in \mathbb{Z}^\times}\neq (\phi_p)_{p\in \mathbb{Z}^\times} \in X_\infty(\mathbb{Z})$, fix $\pi \in \mathbb{T}$, fix $P \in \mathbb{Z}^\times$, and fix $r_1,r_2,\dots, r_n \in \mathbb{Z}_+$. Take an arbitrary $p_0 \in \mathbb{Z}^\times$ such that $P\vert p_0$ and $\phi_{p_0}=e^{2\pi i \theta}$ for some $\theta \in (0,1)$.

Case 1. $\theta$ is rational. Then by the properties of roots of unity, there exist $m \geq 1$ and $q_0$ such that $\vert\phi_{p_0^m}^{q_0}-\pi\vert<\epsilon/\sum_{i=1}^{n}\vert r_i \vert$.

Case 2. $\theta$ is irrational. Then by the properties of irrational rotation, there exists $q_0$ such that $\vert\phi_{p_0}^{q_0}-\pi\vert<\epsilon/\sum_{i=1}^{n}\vert r_i \vert$.

Overall, there exist $p,q \in R^\times$ with $P \vert p$, such that $\vert\phi_{p}^{q}-\pi\vert<\epsilon/\sum_{i=1}^{n}\vert r_i \vert$. For $1\leq i\leq n$, we may assume that $r_i \geq 0$ and we calculate that
\begin{align*}
\vert\phi_{p}(qr_i)-\pi(r_i)\vert&=\vert\phi_{p}^{qr_i}-\pi^{r_i}\vert
\\&=\vert \phi_{p}^{q}-\pi\vert\vert\sum_{j=0}^{r_i-1}\phi_{p}^{q(r_i-1-j)}\pi^{j}\vert
\\&\leq\vert \phi_{p}^{q}-\pi\vert\sum_{j=0}^{r_i-1}\vert\phi_{p}^{q(r_i-1-j)}\pi^{j}\vert
\\&<\epsilon r_i/\sum_{i=1}^{n}\vert r_i \vert
\\&<\epsilon.
\end{align*}

So $\mathbb{Z}$ satisfies the condition of Lemma~\ref{Q(X_infty(R)/Qtimes)}.
\end{eg}

\begin{eg}
Let $R=\mathbb{Z}[\sqrt{-3}]$. Then $\mathbb{Z}[\sqrt{-3}]_+\cong \mathbb{Z}^2$ and $\widehat{\mathbb{Z}[\sqrt{-3}]_+}\cong\mathbb{T}^2$. Fix $\epsilon>0$, fix $((1,1))_{p \in R^\times}\neq ((a_p,b_p))_{p\in R^\times} \in X_\infty(\mathbb{Z}[\sqrt{-3}])$, fix $(\pi,\rho) \in \mathbb{T}^2$, fix $P \in R^\times$, and fix $r_1+s_1\sqrt{-3} ,r_2+s_2\sqrt{-3},\dots, r_n+s_n\sqrt{-3} \in \mathbb{Z}[\sqrt{-3}]_+$. Take an arbitrary $P \vert p_0 \in R^\times$ such that $(a_{p_0},b_{p_0}) \neq (1,1)$. there exist $p,q=q_1+q_2\sqrt{-3} \in R^\times$ with $P \vert p$, such that $\vert a_{p}^{q_1}b_p^{q_2}-\pi\vert,\vert a_p^{-3q_2}b_p^{q_1}-\rho\vert<\frac{\epsilon}{\sum_{i=1}^{n}\vert r_i \vert+\vert s_i \vert}$. For $1\leq i\leq n$, we may assume that $r_i \geq 0$ and we estimate that
\begin{align*}
&\vert(a_p,b_p)(q(r_i+s_i\sqrt{-3}))-(\pi,\rho)(r_i+s_i\sqrt{-3})\vert
\\&=\vert (a_{p}^{q_1}b_p^{q_2})^{r_i}(a_p^{-3q_2}b_p^{q_1})^{s_i}-\pi^{r_i}\rho^{s_i}\vert
\\&=\vert ((a_{p}^{q_1}b_p^{q_2})^{r_i}-\pi^{r_i})(a_p^{-3q_2}b_p^{q_1})^{s_i}+\pi^{r_i}((a_p^{-3q_2}b_p^{q_1})^{s_i}-\rho^{s_i})\vert
\\&\leq\vert (a_{p}^{q_1}b_p^{q_2})^{r_i}-\pi^{r_i}\vert+\vert(a_p^{-3q_2}b_p^{q_1})^{s_i}-\rho^{s_i}\vert
\\&< \frac{\epsilon\vert r_i\vert}{\sum_{i=1}^{n}\vert r_i \vert+\vert s_i \vert}+\frac{\epsilon \vert s_i \vert}{\sum_{i=1}^{n}\vert r_i \vert+\vert s_i \vert}
\\&\leq\epsilon.
\end{align*}

So $\mathbb{Z}[\sqrt{-3}]$ satisfies the condition of Lemma~\ref{Q(X_infty(R)/Qtimes)}.

Finally we conclude that any (concrete) order of number fields (for the background about number fields, one may refer to \cite{Neu99}) satisfies the condition of Lemma~\ref{Q(X_infty(R)/Qtimes)} (using a similar argument of this example).
\end{eg}

\section*{Appendix: The relationship between $C^*(R_+) \rtimes R^\times$ and semigroup C*-algebras}

In this appendix we show that $C^*(R_+) \rtimes R^\times$ is an extension of the boundary quotient of the opposite semigroup of the $ax+b$-semigroup of the ring and that when the ring is a GCD domain, $C^*(R_+) \rtimes R^\times$ is isomorphic to the boundary quotient of the opposite semigroup of the $ax+b$-semigroup of the ring.

\begin{Definition}[{\cite[Section~2]{LR96}, \cite[Definition~2.13]{Li12}}]
Let $P$ be a semigroup, let $A$ be a unital C*-algebra, and let $\alpha:P \to \mathrm{End}(A)$ be a semigroup homomorphism such that $\alpha_p$ is injective for all $p \in P$. Define the \emph{semigroup crossed product} $A \rtimes_{\alpha} P$ to be the universal unital C*-algebra generated by the image of a unital homomorphism $i_A:A \to A \rtimes_{\alpha} P$ and a semigroup homomorphism $i_P:P \to \mathrm{Isom}(A \rtimes_{\alpha} P)$ satisfying the following conditions:
\begin{enumerate}
\item $i_P(p)i_A(a)i_P(p)^*=i_A(\alpha_p(a))$, for all $p \in P,a \in A$.
\item For any unital C*-algebra $B$, any unital homomorphism $j_A:A \to B$, any semigroup homomorphism $j_P:P \to \mathrm{Isom}(B)$ satisfying $j_P(p)j_A(a)j_P(p)^*=j_A(\alpha_p(a))$, there exists a unique unital homomorphism $\Phi:A \rtimes_{\alpha} P \to B$, such that $\Phi \circ i_A=j_A$ and $\Phi \circ i_P=j_P$.
\end{enumerate}
\end{Definition}

\begin{Remark}
\begin{enumerate}
\item $i_A(1_A)=i_P(1_P)$ are both the unit of $A \rtimes_{\alpha} P$.
\item If $\alpha_p$ is unital for all $p \in P$, then for any $p \in P, i_P(p)$ is a unitary. To see this, we calculate that $i_P(p)i_P(p)^*=i_P(p)i_A(1_A)i_P(p)^*=i_A(\alpha_p(1_A))=i_A(1_A)$.
\end{enumerate}
\end{Remark}

\begin{Notation}[{\cite{BRRW14, Li12}}]
Let $P$ be a semigroup. For $p \in P$, we also denote by $p$ the left multiplication map $q \mapsto pq$. The set of \emph{constructible right ideals} is defined to be
\[
\mathcal{J}(P):=\{p_1^{-1}q_1\cdots p_n^{-1}q_nP:n \geq 1, p_1,q_1,\dots, p_n,q_n \in P\} \cup \{\emptyset\}.
\]
A finite subset $F \subset \mathcal{J}(P)$ is called a \emph{foundation set} if for any nonempty $X \in \mathcal{J}(P)$, there exists $Y \in F$ such that $X \cap Y \neq \emptyset$.
\end{Notation}

\begin{Definition}[{\cite[Remark~5.5]{BRRW14}, \cite[Definition~2.2]{Li12}}]
Let $P$ be a semigroup. Define the \emph{full semigroup C*-algebra} $C^*(P)$ of $P$ to be the universal unital C*-algebra generated by a family of isometries $\{v_p\}_{p \in P}$ and a family of projections $\{e_X\}_{X \in \mathcal{J}(P)}$ satisfying the following relations:
\begin{enumerate}
\item\label{vpvq=vpq} $v_p v_q=v_{pq}$ for all $p,q \in P$;
\item  $v_p e_X v_p^*=e_{pX}$ for all $p \in P,X \in \mathcal{J}(P)$;
\item $e_\emptyset=0$ and $e_P=1$;
\item\label{eXeY=eXY} $e_X e_Y=e_{X \cap Y}$ for all $X,Y \in \mathcal{J}(P)$.
\end{enumerate}
Define the \emph{boundary quotient} $\mathcal{Q}(P)$ of $C^*(P)$ to be the universal unital C*-algebra generated by a family of isometries $\{v_p\}_{p \in P}$ and a family of projections $\{e_X\}_{X \in \mathcal{J}(P)}$ satisfying Condition~(1)--(4) and for any foundation set $F \subset \mathcal{J}(P), \prod_{X \in F}(1-e_X)=0$.
\end{Definition}

\begin{Definition}[{\cite[Definition~2.1]{BRRW14}, \cite[Definition~2.17]{Nor14}}]
Let $P$ be a semigroup. Then $P$ is said to be \emph{right LCM} (or satisfy the \emph{Clifford condition}) if the intersection of two principal right ideals is either empty or a principal right ideal.
\end{Definition}

\begin{Notation}
Let $P$ be a semigroup. Denote by $P^{\mathrm{op}}$ the opposite semigroup of $P$. Let $R$ be an integral domain. Denote by $R_+ \rtimes R^\times$ the $ax+b$-semigroup of $R$. Denote by $\times$ the multiplication of $(R_+ \rtimes R^\times)^{\mathrm{op}}$, that is $(r_1,p_1)\times (r_2,p_2)=(r_2,p_2)(r_1,p_1)=(r_2+p_2r_1,p_1p_2)$.
\end{Notation}

\begin{Remark}
Let $R$ be an integral domain. Then any nonempty element of $\mathcal{J}((R_+ \rtimes R^\times)^{\mathrm{op}})$ is a foundation set of $(R_+ \rtimes R^\times)^{\mathrm{op}}$. To see this, for any $(r_1,p_1),(r_2,p_2) \in (R_+ \rtimes R^\times)^{\mathrm{op}}$, we compute that
\begin{align*}
(r_1,p_1) \times (p_1r_2,p_2)&=(p_1r_2,p_2)(r_1,p_1)
\\&=(p_1r_2+p_2r_1,p_1p_2)
\\&=(p_2r_1,p_1)(r_2,p_2)
\\&=(r_2,p_2)\times (p_2r_1,p_1).
\end{align*}
\end{Remark}

\begin{Theorem}
Let $R$ be an integral domain. Then $C^*(R_+) \rtimes R^\times$ is an extension of $\mathcal{Q}((R_+ \rtimes R^\times)^{\mathrm{op}})$. Moreover, if $R$ is a GCD domain (see \cite{CG00}), then $C^*(R_+) \rtimes R^\times \cong \mathcal{Q}((R_+ \rtimes R^\times)^{\mathrm{op}})$.
\end{Theorem}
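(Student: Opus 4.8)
The plan is to construct an explicit homomorphism from $\mathcal{Q}((R_+ \rtimes R^\times)^{\mathrm{op}})$ into $C^*(R_+) \rtimes R^\times$, show it is surjective, and identify the kernel to obtain the extension; then, in the GCD case, show the kernel is trivial. First I would set up the generators: in $C^*(R_+) \rtimes R^\times$ we have the unitaries $u_r = i_{C^*(R_+)}(u_r)$ for $r \in R_+$ and the isometries $s_p = i_{R^\times}(p)$ for $p \in R^\times$, satisfying $s_p u_r s_p^* = u_{pr}$. I would define a candidate assignment $v_{(r,p)} \mapsto u_r s_p$ (noting the opposite-multiplication twist: one must check $v_{(r_1,p_1)} v_{(r_2,p_2)} = v_{(r_2,p_2)(r_1,p_1)} = v_{(r_2 + p_2 r_1, p_1 p_2)}$, which forces the order $u_{r_2} s_{p_2} u_{r_1} s_{p_1} = u_{r_2} u_{p_2 r_1} s_{p_2 p_1} = u_{r_2 + p_2 r_1} s_{p_1 p_2}$, using commutativity of $R^\times$). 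Then for each constructible right ideal $X \in \mathcal{J}((R_+ \rtimes R^\times)^{\mathrm{op}})$ one assigns a projection $e_X \mapsto$ the corresponding range projection built from the $u_r s_p$'s; I would verify relations (1)--(4) of the boundary quotient definition and, crucially, the foundation-set relation $\prod_{X \in F}(1 - e_X) = 0$. Here the Remark just above the statement does the heavy lifting: every nonempty element of $\mathcal{J}((R_+ \rtimes R^\times)^{\mathrm{op}})$ is itself a foundation set, so the foundation-set relations reduce to $1 - e_X = 0$ for each nonempty $X$, i.e.\ $e_X = 1$; this should follow because the defining intersection identity $(r_1, p_1) \times (p_1 r_2, p_2) = (r_2, p_2) \times (p_2 r_1, p_1)$ translates, in $C^*(R_+) \rtimes R^\times$, into the fact that the relevant range projections are all equal to $1$ (the isometries $s_p$ are in fact unitaries when $\alpha_p$ is unital, as recorded in the Remark after the semigroup-crossed-product definition — wait, here $\alpha_p$ is \emph{not} unital, so $s_p$ is a genuine isometry; the point is rather that $u_r$ is unitary and the relevant ideals' images generate the whole algebra). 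By the universal property of $\mathcal{Q}((R_+ \rtimes R^\times)^{\mathrm{op}})$ this yields a homomorphism $\Phi$, and surjectivity is immediate since $u_r = \Phi(v_{(r,1)})$ and $s_p = \Phi(v_{(0,p)})$ generate $C^*(R_+) \rtimes R^\times$.

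For the converse direction needed to pin down the kernel and to get the isomorphism in the GCD case, I would construct a homomorphism $\Psi: C^*(R_+) \rtimes R^\times \to \mathcal{Q}((R_+ \rtimes R^\times)^{\mathrm{op}})$ using the universal property of the semigroup crossed product: send $u_r \mapsto v_{(r,1)}$ (a unitary, since $(r,1)$ is invertible in the group $R_+$ sitting inside, or directly because $v_{(r,1)} v_{(-r,1)} = v_{(0,1)} = 1$) and $p \mapsto v_{(0,p)}$, and check the covariance relation $v_{(0,p)} v_{(r,1)} v_{(0,p)}^* = v_{(pr,1)} = \Psi(\alpha_p(u_r))$ together with the fact that $v_{(0,p)}$ is an isometry. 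One then verifies $\Psi \circ \Phi = \mathrm{id}$ on generators, which shows $\Phi$ is always a split surjection, hence $C^*(R_+) \rtimes R^\times$ is (isomorphic to) a quotient of $\mathcal{Q}((R_+ \rtimes R^\times)^{\mathrm{op}})$ — actually to get "extension" in the stated direction I should be careful about which algebra surjects onto which; the cleanest route is: $\Phi$ surjective gives a short exact sequence $0 \to \ker \Phi \to \mathcal{Q}((R_+ \rtimes R^\times)^{\mathrm{op}}) \to C^*(R_+) \rtimes R^\times \to 0$, i.e.\ $C^*(R_+) \rtimes R^\times$ is a quotient of the boundary quotient; rereading the statement, "extension of" here should mean exactly this. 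For the GCD case I would show $\Phi$ is injective, equivalently that $\Psi$ is also surjective, equivalently that the $e_X$ in $\mathcal{Q}((R_+ \rtimes R^\times)^{\mathrm{op}})$ lie in the image of $\Psi$; when $R$ is a GCD domain, intersections of principal right ideals of $(R_+ \rtimes R^\times)^{\mathrm{op}}$ are again principal (so $(R_+ \rtimes R^\times)^{\mathrm{op}}$ is right LCM), which lets one express every $e_X$ as $v_{(r,p)} v_{(r,p)}^* = v_{(0,p)} v_{(0,p)}^*$ for a suitable generator of the principal ideal $X$, and $v_{(0,p)} v_{(0,p)}^* = \Psi(s_p s_p^*)$; combined with $\Psi \circ \Phi = \mathrm{id}$ this gives $\Phi \circ \Psi = \mathrm{id}$ as well.

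The main obstacle I anticipate is the bookkeeping around the opposite semigroup and the identification of constructible right ideals of $(R_+ \rtimes R^\times)^{\mathrm{op}}$ with concrete range projections in $C^*(R_+) \rtimes R^\times$: one must carefully compute what $p_1^{-1} q_1 \cdots p_n^{-1} q_n P$ looks like for the twisted multiplication $\times$ (where $p^{-1}$ denotes the preimage under left multiplication by $p$ in $P^{\mathrm{op}}$, which is right multiplication in $P$), check that these ideals are parametrized in a way compatible with the GCD structure, and confirm that the Li-type presentation relations (2) and (4) are preserved. In particular, relation (4), $e_X e_Y = e_{X \cap Y}$, requires knowing how intersections of these ideals behave, and this is precisely where the GCD hypothesis enters to upgrade the extension to an isomorphism; without it, the range projections generate a possibly larger commutative subalgebra and the map $\Phi$ genuinely has a kernel. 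I would also double-check the degenerate relation $e_\emptyset = 0$: the empty ideal arises when two principal ideals $qP$ and $q'P$ are disjoint, i.e.\ when certain equations in $R$ have no solution, and one needs that the corresponding product of range projections vanishes in $C^*(R_+) \rtimes R^\times$ — for an integral domain this should reduce to the injectivity of $\alpha_p$ and the fact that $u_r$ is unitary, but it is worth stating explicitly.
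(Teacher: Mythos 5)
There are several concrete errors here that break the argument. First, your generator assignment $v_{(r,p)}\mapsto u_r s_p$ does not satisfy relation (1) of the presentation: with $s_pu_rs_p^*=u_{pr}$ one computes $\Phi(v_{(r_1,p_1)})\Phi(v_{(r_2,p_2)})=u_{r_1}s_{p_1}u_{r_2}s_{p_2}=u_{r_1+p_1r_2}s_{p_1p_2}$, which is $\Phi(v_{(r_2,p_2)\times(r_1,p_1)})$ rather than $\Phi(v_{(r_1,p_1)\times(r_2,p_2)})=u_{r_2+p_2r_1}s_{p_1p_2}$; writing the factors in the opposite order, as you do, only verifies that your map is an \emph{anti}-homomorphism of $(R_+\rtimes R^\times)^{\mathrm{op}}$, i.e.\ a homomorphism of $R_+\rtimes R^\times$, which is not what is required. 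The correct assignment is $v_{(r,p)}\mapsto s_p^*u_r$ (the paper's $V_{(r,p)}:=i_P(p)^*i_A(u_r)$). The same order problem infects your $\Psi$: the covariance you propose to check, $v_{(0,p)}v_{(r,1)}v_{(0,p)}^*=v_{(pr,1)}$, is false, since $(0,p)\times(r,1)=(r,p)$ while $(0,p)\times(pr,1)=(pr,p)=(r,1)\times(0,p)$, so what actually holds is $v_{(0,p)}^*v_{(r,1)}v_{(0,p)}=v_{(pr,1)}$ and one must send $p\mapsto v_{(0,p)}^*$, not $v_{(0,p)}$ (this is exactly what the paper does). Consequently your claimed identity $\Psi\circ\Phi=\mathrm{id}$ also fails: $\Psi(u_rs_p)=v_{(r,1)}v_{(0,p)}=v_{(pr,p)}\neq v_{(r,p)}$. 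Second, your parenthetical claim that $\alpha_p$ is not unital is wrong: $\alpha_p(u_0)=u_{p\cdot 0}=1$, as the paper notes explicitly, so $i_P(p)$ is a unitary; this matters because every $v_{(r,p)}$ is a unitary in $\mathcal{Q}((R_+\rtimes R^\times)^{\mathrm{op}})$ (each singleton $\{(r,p)\times(R_+\rtimes R^\times)^{\mathrm{op}}\}$ is a foundation set by the preceding Remark), so any unital homomorphism out of $\mathcal{Q}$ must send it to a unitary. Third, you have the direction of ``extension'' backwards: the intended (and standard) meaning, and what the paper proves, is a surjection $C^*(R_+)\rtimes R^\times\twoheadrightarrow\mathcal{Q}((R_+\rtimes R^\times)^{\mathrm{op}})$, obtained from the universal property of the semigroup crossed product applied to $j_A(u_r)=v_{(r,1)}$, $j_P(p)=v_{(0,p)}^*$, with surjectivity from $v_{(r,p)}=v_{(0,p)}v_{(r,1)}$. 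Your plan instead exhibits the crossed product as a quotient of $\mathcal{Q}$ and declares that this is what ``extension'' means; that is the opposite statement.

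For the GCD half, the paper's route is also different from yours and avoids your main difficulty: it shows $(R_+\rtimes R^\times)^{\mathrm{op}}$ is right LCM and invokes Starling's presentation of the boundary quotient of a right LCM semigroup by unitaries subject to two explicit relations, then verifies those relations for $V_{(r,p)}=i_P(p)^*i_A(u_r)$ to obtain the map out of $\mathcal{Q}$, and checks the two composites are the identity on generators. Your alternative of matching each $e_X$ with a range projection accomplishes little here (all such projections are $1$ on both sides) and does not by itself produce a homomorphism out of $\mathcal{Q}$, which is the actual content; you still need a usable presentation of $\mathcal{Q}$ to map out of, and your candidate generators fail relation (1) as explained above.
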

\begin{proof}
Denote by $i_A:C^*(R_+) \to C^*(R_+) \rtimes R^\times$ and $i_P:R^{\times} \to \mathrm{Isom}(C^*(R_+) \rtimes R^\times)$ the canonical homomorphisms generating $C^*(R_+) \rtimes R^\times$. Denote by $\{v_{(r,p)}:(r,p) \in (R_+ \rtimes R^\times)^{\mathrm{op}}\}$ the family of isometries and by $\{e_X:X \in \mathcal{J}((R_+ \rtimes R^\times)^{\mathrm{op}})\}$ the family of projections generating $\mathcal{Q}((R_+ \rtimes R^\times)^{\mathrm{op}})$.

Firstly notice that for any $(r,p)\in (R_+ \rtimes R^\times)^{\mathrm{op}}, 1-v_{(r,p)}v_{(r,p)}^*=1-e_{(r,p) \times (R_+ \rtimes R^\times)^{\mathrm{op}}}=0$ because $\{(r,p)\times (R_+ \rtimes R^\times)^{\mathrm{op}} \}$ is a foundation set. So each $v_{(r,p)}$ is a unitary.

For $r \in R_+$, define $U_r:=v_{(r,1)}$. Since for any $r,s \in R_+$,

\[
U_r U_s=v_{(r,1)}v_{(s,1)}=v_{(s,1)(r,1)}=v_{(r+s,1)}=v_{(r,1)(s,1)}=v_{(s,1)}v_{(r,1)}=U_sU_r,
\]
we get a homomorphism $j_A:C^*(R_+) \to \mathcal{Q}((R_+ \rtimes R^\times)^{\mathrm{op}}),u_r \mapsto v_{(r,1)}$ by the universal property of $C^*(R_+)$.

For $p \in R^\times$, define $j_P(p):=v_{(0,p)}^*$. Since for any $p,q \in R^\times$, we have
\[
j_P(p)j_P(q)=v_{(0,p)}^*v_{(0,q)}^*=(v_{(0,q)}v_{(0,p)})^*=(v_{(0,p)(0,q)})^*=v_{(0,pq)}^*=j_P(pq),
\]
we deduce that $j_P:R^{\times} \to \mathrm{Isom}(\mathcal{Q}((R_+ \rtimes R^\times)^{\mathrm{op}}))$ is a semigroup homomorphism.

For any $p \in R^\times,r \in R_+$, we compute that
\[
j_P(p)j_A(u_r)j_P(p)^*=v_{(0,p)}^*v_{(r,1)}v_{(0,p)}=v_{(0,p)}^*v_{(pr,p)}=v_{(pr,1)}=j_A(u_{pr})=j_A(\alpha_p(u_r)).
\]

By the universal property of $C^*(R_+) \rtimes R^\times$, there exists a unique homomorphism $\Phi:C^*(R_+) \rtimes R^\times \to \mathcal{Q}((R_+ \rtimes R^\times)^{\mathrm{op}})$, such that $\Phi \circ i_A=j_A$ and $\Phi \circ i_P=j_P$. Since for any $(r,p) \in (R_+ \rtimes R^\times)^{\mathrm{op}}, v_{(r,p)}=v_{(0,p)}v_{(r,1)}, \Phi$ is surjective. So $C^*(R_+) \rtimes R^\times$ is an extension of $\mathcal{Q}((R_+ \rtimes R^\times)^{\mathrm{op}})$.

Now we assume that $R$ is a GCD domain. By \cite[Proposition~2.23]{Nor14}, $R^\times$ is right LCM. For any $(r_1,p_1),(r_2,p_2) \in (R_+ \rtimes R^\times)^{\mathrm{op}}$, suppose that $p_1R^\times \cap p_2R^\times=pR^\times$ for some $p \in R^\times$. We claim that $(r_1,p_1)\times (R_+ \rtimes R^\times)^{\mathrm{op}}\cap(r_2,p_2) \times (R_+ \rtimes R^\times)^{\mathrm{op}}=(0,p) \times (R_+ \rtimes R^\times)^{\mathrm{op}}$. We prove this claim. For any $(s_1,q_1),(s_2,q_2) \in (R_+ \rtimes R^\times)^{\mathrm{op}}$, if $(r_1,p_1)\times (s_1,q_1)=(r_2,p_2) \times (s_2,q_2)$, then $(r_1,p_1)\times (s_1,q_1)=(r_2,p_2) \times (s_2,q_2)=(0,p) \times (s_1+q_1r_1,\frac{q_1p_1}{p})$. Conversely, for any $(s,q) \in (R_+ \rtimes R^\times)^{\mathrm{op}}$, we have $(0,p) \times (s,q)=(r_1,p_1)\times (s-\frac{pqr_1}{p_1},\frac{pq}{p_1})=(r_2,p_2) \times (s-\frac{pqr_2}{p_2},\frac{pq}{p_2})$. So we finished proving the claim. Hence $(R_+ \rtimes R^\times)^{\mathrm{op}}$ is right LCM as well.

Since $(R_+ \rtimes R^\times)^{\mathrm{op}}$ is right LCM, by \cite[Lemma~3.4]{Sta15}, $\mathcal{Q}((R_+ \rtimes R^\times)^{\mathrm{op}})$ is the universal unital C*-algebra generated by a family of unitaries $\{v_{(r,p)}:(r,p) \in (R_+ \rtimes R^\times)^{\mathrm{op}}\}$ satisfying the following conditions:
\begin{enumerate}
\item $v_{(r_1,p_1)}v_{(r_2,p_2)}=v_{(r_1,p_1) \times (r_2,p_2)}$;
\item $v_{(r_1,p_1)}^*v_{(r_2,p_2)}=v_{(s_1,q_1)}v_{(s_2,q_2)}^*$, whenever $(r_1,p_1)\times (s_1,q_1)=(r_2,p_2) \times (s_2,q_2)$ and $(r_1,p_1)\times (R_+ \rtimes R^\times)^{\mathrm{op}}\cap(r_2,p_2)\times (R_+ \rtimes R^\times)^{\mathrm{op}}=(r_1,p_1)\times (s_1,q_1)\times (R_+ \rtimes R^\times)^{\mathrm{op}}$.
\end{enumerate}

For $(r,p) \in (R_+ \rtimes R^\times)^{\mathrm{op}}$, define $V_{(r,p)}:=i_P(p)^*i_A(u_r)$. Finally we check that $\{V_{(r,p)}\}$ satisfies the above two conditions. For any $(r_1,p_1) ,(r_2,p_2) \in (R_+ \rtimes R^\times)^{\mathrm{op}}$, we have
\begin{align*}
V_{(r_1,p_1)}V_{(r_2,p_2)}&=i_P(p_1)^*i_A(u_{r_1})i_P(p_2)^*i_A(u_{r_2})
\\&=i_P(p_1)^*i_P(p_2)^*i_A(\alpha_{p_2}(u_{r_1}))i_A(u_{r_2})
\\&=(i_P(p_2)i_P(p_1))^*i_A(u_{p_2r_1})i_A(u_{r_2})
\\&=i_P(p_1p_2)^*i_A(u_{r_2+p_2r_1})
\\&=V_{(r_2+p_2r_1,p_1p_2)}
\\&=V_{(r_1,p_1) \times (r_2,p_2)}
\end{align*}

For $(r_1,p_1),(r_2,p_2) \in (R_+ \rtimes R^\times)^{\mathrm{op}}$, suppose that $p_1R^\times \cap p_2R^\times=pR^\times$ for some $p \in R^\times$.  By the above claim $(r_1,p_1)\times (R_+ \rtimes R^\times)^{\mathrm{op}}\cap(r_2,p_2)\times (R_+ \rtimes R^\times)^{\mathrm{op}}=(0,p)\times (R_+ \rtimes R^\times)^{\mathrm{op}}$. It is not hard to see that $(r_1,p_1) \times (-\frac{pr_1}{p_1},\frac{p}{p_1})=(r_2,p_2) \times (-\frac{pr_2}{p_2},\frac{p}{p_2})=(0,p)$. So
\begin{align*}
V_{(r_1,p_1)}^*V_{(r_2,p_2)}&=i_A(u_{-r_1})i_P(p_1)i_P(p_2)^*i_A(u_{r_2})
\\&=i_A(u_{-r_1})i_P(\frac{p}{p_1})^*i_P(\frac{p}{p_2})i_A(u_{r_2})
\\&=i_P(\frac{p}{p_1})^*i_A(u_{-\frac{pr_1}{p_1}})i_A(u_{\frac{pr_2}{p_2}})i_P(\frac{p}{p_2})
\\&=V_{(-\frac{pr_1}{p_1},\frac{p}{p_1})}V_{(-\frac{pr_2}{p_2},\frac{p}{p_2})}^*.
\end{align*}

By the universal property of $\mathcal{Q}((R_+ \rtimes R^\times)^{\mathrm{op}})$, there exists a homomorphism $\Psi:\mathcal{Q}((R_+ \rtimes R^\times)^{\mathrm{op}}) \to C^*(R_+) \rtimes R^\times$ such that $\Psi(v_{(r,p)})=i_P(p)^*i_A(u_r)$.

Since
\[
\Phi \circ \Psi(v_{(r,p)})=\Phi(i_P(p)^*i_A(u_r))=j_P(p)^*j_A(u_r)=v_{(0,p)}v_{(r,1)}=v_{(r,p)},
\]
\[
\Psi\circ \Phi(i_A(u_r))=\Psi(j_A(u_r))=\Psi(v_{(r,1)})=i_A(u_r),
\]
\[
\Psi\circ \Phi(i_P(p))=\Psi(j_P(p))=\Psi(v_{(0,p)})^*=i_P(p),
\]
we conclude that $C^*(R_+) \rtimes R^\times \cong \mathcal{Q}((R_+ \rtimes R^\times)^{\mathrm{op}})$.
\end{proof}
\subsection*{Acknowledgments}

The first author wants to thank the second author for his encouragement and patient supervision.

\end{document}